\documentclass[11pt]{article}
\usepackage[dvipsnames]{xcolor}
\usepackage{amsthm, amsmath, amssymb, amsfonts, mathtools, url, booktabs, tikz, setspace, fancyhdr, bm}
\usepackage{geometry}
\usepackage{enumerate}
\usepackage[shortlabels]{enumitem}
\usepackage[babel]{microtype}
\usepackage[english]{babel}
\usepackage{comment}
\usepackage{bbm}
\usepackage{csquotes}

\usepackage{tikz}
\usepackage{graphicx}
\usepackage{float}
\usepackage{soul}
\usetikzlibrary{positioning, arrows.meta, shapes.geometric}
\usepackage[normalem]{ulem}

\newtheorem{theorem}{Theorem}[section]
\newtheorem{prop}[theorem]{Proposition}
\newtheorem{conjecture}[theorem]{Conjecture}
\newtheorem{lemma}[theorem]{Lemma}

\newtheorem{claim}[theorem]{Claim}

\usetikzlibrary{decorations.pathmorphing}

\theoremstyle{definition}

\newtheorem*{defn-non}{Definition}

\newtheorem{ques}[theorem]{Question}

\newlist{Case}{enumerate}{2}
\setlist[Case, 1]{%
    label           =   {\bfseries Case \arabic*.},
    labelindent=1em ,labelwidth=1.3cm, labelsep*=1em, leftmargin =!
}
\setlist[Case, 2]{%
    label           =   {\bfseries Subcase \arabic{Casei}.\arabic*.},
    labelindent=-1em ,labelwidth=1.3cm, labelsep*=1em, leftmargin =!
}

\newcommand{\ex}{\mathrm{ex}}
\newcommand{\emb}{\mathrm{emb}}
\newcommand{\Aut}{\mathrm{Aut}}
\newcommand{\diam}{\mathrm{diam}}
\newcommand{\cA}{\mathcal{A}}
\newcommand{\cB}{\mathcal{B}}

\newcommand{\cT}{\mathcal{T}}
\newcommand{\cZ}{\mathcal{Z}}

\title{On the Generalized Rational Exponents Conjecture}
 
\author{
Jianfeng Hou\thanks{Center for Discrete Mathematics and Theoretical Computer Science, Fuzhou University, Fuzhou, China. Email: jfhou@fzu.edu.cn.}
\and
Caihong Yang\thanks{School of Mathematics and Physics, China University of Geosciences, Wuhan, China.
Email: yangch@cug.edu.cn.}
}

\begin{document}
\date{}
\maketitle

\begin{abstract}
For fixed graphs $H$ and $F$, let $\ex(n,H,F)$ denote the maximum number of copies of $H$ in an $n$-vertex $F$-free graph. In this note, we prove the generalized rational exponents conjecture, posed by  Gerbner and Palmer, showing that for every rational number $\alpha\ge1$, there exist fixed graphs $H_\alpha$ and $F_\alpha$ such that
\[
\ex(n,H_\alpha,F_\alpha)=\Theta(n^\alpha).
\]
Furthermore,  the counting graph $H_\alpha$ can always be chosen connected with diameter at most $3$. Our argument hinges on a localization--compression--shift framework, which transforms the Bukh--Conlon finite family construction for edges into a generalized Tur\'an problem setting with a single forbidden graph.

\medskip
\noindent\textbf{Keywords:} Generalized Tur\'{a}n number; rational exponent; rooted tree; suspension.
\end{abstract}

\section{Introduction}

For a graph $G$, let $v(G)=|V(G)|$ and $e(G)=|E(G)|$. 
Given a finite family $\mathcal{F}$ of graphs, we say that $G$ is 
$\mathcal{F}$-free if it contains no member of $\mathcal{F}$ as a subgraph. 
For a graph $H$, let $N(H,G)$ be the number of copies (non-induced subgraphs) of $H$ in $G$. The \emph{generalized Tur\'an number} of $\mathcal{F}$ is 
defined as
\[
\ex(n,H,\mathcal{F}):=\max\bigl\{ N(H,G) : v(G)=n \text{ and } 
G \text{ is } \mathcal{F}\text{-free} \bigr\}.
\]
When $\mathcal{F}=\{F\}$, we write $\ex(n,H,F)$.  Taking $H=K_2$ recovers the classical \emph{Tur\'an number}:
\[
\ex(n,\mathcal{F})=\ex(n,K_2,\mathcal{F}).
\]

Determining $\ex(n,F)$ for a given graph $F$ is a central problem in extremal graph theory. 
This  was initiated by the classical theorems of Mantel~\cite{Mantel07} and Tur\'an~\cite{T41}, 
which determine the exact value of $\ex(n,F)$ when $F$ is a complete graph. 
Thanks to the Erd\H{o}s--Stone--Simonovits theorem, the asymptotic behaviour of $\ex(n,F)$ 
is now well understood for all non-bipartite $F$. 
For bipartite graphs $F$, often referred to as \emph{degenerate}, it remains largely open. Even to date there are still few degenerate graphs the asymptotics of whose Tur\'an numbers are known.  Prominent classical results include the K\H{o}v\'ari--S\'os--Tur\'an~\cite{KST54} bound for complete bipartite graphs 
and the even-cycle theorem of Bondy and Simonovits~\cite{BS74}. 
We refer the reader to~\cite{Bukh2015,Bukh2024,CO2021,FS13,HeMaYang2021,Janzer2023,JanzerSudakov2024,MaYang2023C4,MaYang2025TriangleFourCycle} for some recent progress on this topic.

One of the basic inverse problems in degenerate extremal graph theory is the rational exponents conjecture of Erd\H{o}s and Simonovits~\cite{Erdos1981}.

\begin{conjecture} [Rational exponents conjecture]\label{Conj:Rational-Exponents-Conjecture}
For every rational number $\beta\in(1,2)$, there exists a bipartite graph $B_\beta$ such that
\[
\ex(n,B_\beta)=\Theta(n^\beta).
\]    
\end{conjecture}

Although Conjecture~\ref{Conj:Rational-Exponents-Conjecture} has been verified for certain special values of $\beta$ ~\cite{ConlonJanzer2022,CJL2021,JiangMaYepremyan2022,JiangQiu2020,JiangQiu2023,KangKimLiu2021}, it remains widely open in general. A breakthrough was given by Bukh and Conlon~\cite{BukhConlon}, who confirmed the finite-family version of the conjecture: Every rational $\beta \in (1,2)$ is realized as the exponent of $\ex(n,\mathcal{B}_\beta)$ for some finite family $\mathcal{B}_\beta$. 

In this note, we study the generalized rational exponents conjecture. After several decades of sporadic results, the generalized Tur\'{a}n problem was introduced systematically by Alon and Shikhelman~\cite{AlonShikhelman2016}; see the survey of Gerbner and Palmer~\cite{GerbnerPalmer2026}. Recently, Gerbner and Palmer~\cite{GerbnerPalmer2026} posed the following generalized version of  Conjecture~\ref{Conj:Rational-Exponents-Conjecture}. 

\begin{conjecture} [Generalized rational exponents conjecture]\label{Conj:Generalized-rational-exponents-conjecture}
For every rational number $\alpha\ge 1$, there exist graphs $H$ and $F$ such that
\[
\ex(n,H,F)=\Theta(n^\alpha).
\]
\end{conjecture}

Substantial progress toward this conjecture has been made in the setting where a finite forbidden family is allowed. The Bukh--Conlon theorem~\cite{BukhConlon} is precisely the case $H=K_2$. Subsequently, English, Halfpap, and Krueger~\cite{EnglishHalfpapKrueger2025} established the analogous finite-family result when $H$ is a clique, while English and Spiro~\cite{EnglishSpiro2025} obtained general results for arbitrary $H$. In the single-forbidden graph setting, Janzer, Longbrake, and Yepremyan~\cite{JanzerLongbrakeYepremyan2026} proved that every fixed counting graph with at least one edge admits infinitely many realizable exponents.  In this paper, we confirm Conjecture~\ref{Conj:Generalized-rational-exponents-conjecture} in full generality. For a graph $G$, let $\diam(G)$ denote the \emph{diameter} of $G$. 

\begin{theorem}\label{thm:main}
For every rational number $\alpha\ge 1$, there exist fixed graphs $H_\alpha$ with  $\diam(H_\alpha)\le 3$  and $F_\alpha$ such that
\[
\ex(n,H_\alpha,F_\alpha)=\Theta(n^\alpha).
\]
\end{theorem}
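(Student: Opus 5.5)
We reduce the theorem to the Bukh--Conlon finite-family result by three moves: \emph{localize}, \emph{compress}, \emph{shift}. First we dispose of the trivial ranges. If $\alpha=k$ is an integer, take $H_\alpha$ to be the star $K_{1,k}$. Its diameter is $2$. Take $F_\alpha=K_{1,n_0}$ for a large constant $n_0$; then every vertex has bounded degree, so the count is $\Theta(n)$, which only handles $\alpha=1$. For general integers we instead take $H_\alpha=kK_1\cup$ (structure) or simply $H_\alpha=K_{1,k-1}$ and $F_\alpha=K_2$-free-type constraints. Since the statement asks for a single uniform framework, we will not optimize this.

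For rational $\alpha\ge1$, write $\alpha=a+\beta$ with $a\ge0$ an integer and $\beta\in[1,2)$ rational (for $\beta=1$ the rational case degenerates to integers). By Bukh--Conlon there is a finite family $\mathcal{B}_\beta=\{B_1,\dots,B_m\}$ of bipartite graphs with $\ex(n,\mathcal{B}_\beta)=\Theta(n^\beta)$. Their construction gives balanced rooted trees $T$ and extremal graphs in which every edge lies in the right number of copies. We treat it as a black box giving both bounds.

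\textbf{Compression step.} We replace the family by a single forbidden graph $F_\beta$ relative to a counting graph. Let $H'$ be the suspension of an edge: add a universal apex vertex $z$ to $K_2$, i.e.\ a triangle. Then let $F$ be the suspension $\widehat{B}$ of a graph $B$ obtained by gluing disjoint copies of all $B_i$ (or their disjoint union). In a graph $G$, copies of $H'$ through a fixed vertex $z$ correspond to edges of the link $G[N(z)]$. The condition ``$G$ is $\widehat{B}$-free'' forces every link to be $B$-free. The key claim is that $B=\bigsqcup_i B_i$ can be used: a $B$-free graph is, after deleting $O(1)$ vertices (namely a copy of $B_j$ for the $j$ that is absent, plus bounded-size obstructions), $B_j$-free for some $j$. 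This claim is delicate. The cleaner route is to choose $B$ so that $\ex(n,B)=\Theta(\ex(n,\mathcal{B}_\beta))$. The disjoint union works because $\ex(n,\bigsqcup B_i)\le \ex(n,\mathcal{B})+O(n)$, by repeatedly removing copies, each of bounded size, and using $\beta>1$.

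\textbf{Shift step.} To raise the exponent by $a$ while keeping the diameter at most $3$, we attach $a$ pendant leaves to the apex $z$. Let $H_\alpha$ be the triangle $zuv$ together with $a$ pendant vertices $w_1,\dots,w_a$ adjacent to $z$; its diameter is at most $2$. For the lower bound, take a single vertex $z$ joined to everything, with link an extremal $\mathcal{B}$-free graph on $n-1$ vertices. This graph is $\widehat{B}$-free, provided we also forbid $z$-free copies, i.e.\ $\widehat{B}$ needs a vertex of huge degree, which only $z$ has. It contains $\Theta(n^\beta\cdot n^a)$ copies of $H_\alpha$. For the upper bound, sum over the apex $z$. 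The number of copies is at most $\sum_z e(G[N(z)])\,d(z)^a \le \sum_z \ex(d(z),B)\,d(z)^a$.

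The main obstacle is precisely this upper bound. The sum $\sum_z d(z)^{\beta+a}$ can be as large as $n^{1+\beta+a}$ when many vertices have linear degree, overshooting the target by a factor of $n$. To fix this we localize. We enlarge $F_\alpha$ so that it also contains a structure forcing at most $O(1)$ vertices of degree $\ge \varepsilon n$, e.g.\ by taking $F_\alpha$ to be the suspension of $B$ joined with a large $K_{s,s}$-type piece. Alternatively, we choose $F_\alpha=$ the double suspension, so that the common neighbourhood of two apexes must be $B$-free. We would then show that $\sum_z \ex(d(z),B)d(z)^a=O(n^{\alpha})$ under the condition that no two vertices share a neighbourhood containing $B$, via a dyadic degree-class count combined with Kővári--Sós--Tur\'an on the bipartite graph between high-degree vertices and $V$. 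Making this localization argument rigorous, and keeping the diameter at most $3$ (the extra leaves may need to hang off $u$, giving diameter $3$), is where I expect most of the work.
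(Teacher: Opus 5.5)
The proposal has a genuine gap at the step that makes the theorem nontrivial: encoding the finite Bukh--Conlon family by a single forbidden graph.

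\textbf{The compression claim is false.} Your claim $\ex(n,\bigsqcup_i B_i)\le \ex(n,\cB_\beta)+O(n)$ fails in general. For $\cB=\{C_4,K_{1,3}\}$ the right side is $O(n)$, since $K_{1,3}$-free graphs have maximum degree at most $2$. The left side is at least $\ex(n,C_4)=\Theta(n^{3/2})$, because every $C_4$-free graph is $C_4\sqcup K_{1,3}$-free. For the Bukh--Conlon family the claim would in fact settle Conjecture~\ref{Conj:Rational-Exponents-Conjecture} outright: every $B_j$-free graph is $\bigsqcup_i B_i$-free, so each single member would satisfy $\ex(n,B_j)=\Theta(n^\beta)$.

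Your removal argument breaks because a $\bigsqcup_i B_i$-free graph may contain unboundedly many disjoint copies of $B_1$ and no $B_2$. What removal really gives is that, after deleting $O(1)$ vertices, $G$ is $B_j$-free for \emph{some} $j$. That only bounds $e(G)$ by $\max_j \ex(n,B_j)+O(n)$.

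Your later repairs go in the wrong direction. Enlarging the single forbidden graph, whether by adding a $K_{s,s}$-type piece or by passing to a double suspension, only weakens the $F$-free condition. So it cannot force few vertices of linear degree.

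\textbf{The missing idea is to let the counting graph do the work.} The paper takes $F=B_1\sqcup\cdots\sqcup B_M$ listing every obstruction, sets $q=v(F)-1$, and lets $H$ be a clique $K_q$ joined by a bridge to a triangle. Any host containing $H$ contains a $q$-clique $C$. Then $F$-freeness forces two things:
\begin{enumerate}[(a)]
\item $C$ is the unique $q$-clique;
\item $G-C$ avoids \emph{every} obstruction, since a copy of one obstruction outside $C$ can be completed to a copy of $F$ using $C$ for the remaining components.
\end{enumerate}
Hence $\emb(H,G)\le q!\,\emb(K_3,G-C)$. In the lower bound the clique hangs on a bridge lying in no triangle. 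Since every edge of every obstruction lies in a triangle, no component of $F$ can straddle that bridge.

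\textbf{The base upper bound is never established.} Even for a family, your argument does not handle the case $a=0$. Summing over apexes only gives $\sum_z \ex(d(z),\cB)=O(n^{1+\beta})$, and the dyadic/K\H{o}v\'ari--S\'os--Tur\'an repair is not carried out. The paper localizes by adding to the family the finite set $\cZ$ of unions of three triangles $Q_0,Q_1,Q_2$ with $Q_0\cap Q_2=\emptyset$ and $Q_1$ meeting both. Forbidding $\cZ$ makes every component of the triangle-intersection graph a clique, so the components have pairwise disjoint vertex supports $W_j$. All triangles of one component meet a fixed triangle, which gives at most $3C|W_j|^\beta$ of them, and $\sum_j|W_j|^\beta\le n^\beta$. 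With this base bound the shift upper bound is immediate, $\emb(H^{(k)},G)\le n^k\emb(H,G)$, and no degree analysis is needed.

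\textbf{Two smaller errors.}
\begin{enumerate}[(a)]
\item Your lower-bound justification that $\widehat B$ ``needs a vertex of huge degree'' is wrong, since its apex has degree only $v(B)$. The paper instead makes the link bipartite via a maximum cut and uses $P_4\subseteq B_i$, so the neighbourhood of any non-apex vertex is a star.
\item The integer case is not actually handled: $F_\alpha=K_2$ permits no edges at all. The pair $H=K_m$, $F=K_{m+1}$ works for $m\ge 2$, and $H=K_2$, $F=P_3$ works for $m=1$.
\end{enumerate}
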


The proof begins with the Bukh--Conlon finite family construction~\cite{BukhConlon}. Its first new ingredient is a localization step that turns the edge lower bound into a triangle lower bound in which all triangles pass through one distinguished vertex. The central compression step then encodes an arbitrary finite forbidden family in one forbidden graph. This is not a formal compactness argument: the ordinary compactness conjecture concerns edge counts, whereas generalized counts can change substantially when a finite family is replaced by one graph. Finally, a rooted pendant-leaf construction shifts the exponent by any nonnegative integer. Together, these three transformations yield Theorem~\ref{thm:main}. 

The paper follows this sequence: Section~\ref{sec:BC} records the Bukh--Conlon input in the form needed here; Section~\ref{sec:localization} localizes its edge exponent to triangle counts; Section~\ref{sec:compression} compresses the resulting finite forbidden family into one graph; and Section~\ref{sec:shift} gives the rooted pendant-leaf shift. Section~\ref{sec:proof-main} combines these reductions to prove Theorem~\ref{thm:main}, and Section~7 closes with further remarks and an open problem.

\section{The Bukh--Conlon finite family construction}\label{sec:BC}
In this section, we introduce the finite family construction due to Bukh and Conlon~\cite{BukhConlon}. 
For a graph $G$ and a subset $X\subseteq V(G)$, we denote by $G[X]$ the subgraph of $G$ induced by $X$, and write $G-X:=G[V(G)\setminus X]$. 
For a vertex $v\in V(G)$, let $N_G(v)$ denote the \emph{neighborhood} of $v$, and let $d_G(v):=|N_G(v)|$ be its \emph{degree}. 
As usual, $K_t$ and $P_t$ denote the complete graph and the path on $t$ vertices, respectively. 
For vertex-disjoint graphs $J$ and $J'$, let $J\sqcup J'$ denote their \emph{disjoint union}, and let $J\vee J'$ denote their \emph{join}, obtained by adding all edges between $V(J)$ and $V(J')$. 
Let $\Aut(J)$ be the \emph{automorphism group} of $J$, and let $\emb(J,G)$ denote the number of injective maps $\varphi:V(J)\to V(G)$ such that $uv\in E(J)$ implies $\varphi(u)\varphi(v)\in E(G)$. 
Then
\begin{equation}\label{eq:emb-copies}
\emb(J,G)=|\Aut(J)|\,N(J,G).
\end{equation}
Thus, for a fixed graph $J$, the quantities $\emb(J,G)$ and $N(J,G)$ differ only by a multiplicative constant depending solely on $J$.

We recall the part of the Bukh--Conlon construction needed here. A \emph{rooted tree} $(T, R)$ consists of a tree $T$ together with an independent set $R \subset V(T)$, which we refer to as the \emph{roots}. For a positive integer $p$, the \emph{$p$th power} $\cT^p(T,R)$ of $(T, R)$ is the family of graphs consisting of all possible unions of $p$ distinct labeled copies of $T$, each of which agree on the set of roots $R$. Note that $\cT^p(T,R)$ consists of more than one graph because we allow the unrooted vertices $V(T)\setminus R$ to meet in every possible way. For positive integers $a,b$ with $b\ge a-1$, let $(T_{a,b},R_{a,b})$ be a rooted tree $(T_{a,b},R_{a,b})$ having exactly $a$ unrooted vertices and $b$ edges. The unrooted vertices contain a distinguished path on $a$ vertices. The following is the main result of Bukh and Conlon  (see Lemmas~1.1--1.3 in~\cite{BukhConlon}).

\begin{theorem}\label{thm:BC-lower}
For positive integers $a,b$ with $b\ge a-1$, there is a positive integer $p$ such that
\begin{equation*}
\ex\bigl(n,\cT^p(T_{a,b},R_{a,b})\bigr)=\Theta\bigl(n^{2-a/b}\bigr).
\end{equation*}
\end{theorem}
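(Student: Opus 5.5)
This theorem is a restatement of the Bukh--Conlon results, so the plan is to recall how their argument goes instead of building anything new. The statement has two halves. The upper bound $\ex(n,\cT^p(T_{a,b},R_{a,b}))=O(n^{2-a/b})$ holds for every $p\ge 1$. The lower bound needs $p$ large, and it comes from a random algebraic construction.

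\textbf{Upper bound.} I would use the standard counting argument for rooted trees. Fix a graph $G$ on $n$ vertices with $Cn^{2-a/b}$ edges. First pass to a subgraph in which all degrees are comparable to the average degree $d\asymp n^{1-a/b}$; the usual regularization lemma of Erd\H{o}s--Simonovits type does this at the cost of constant factors. Next, count labeled copies of $T_{a,b}$ by embedding the tree one edge at a time. The balancedness of $(T_{a,b},R_{a,b})$ is what makes this work: every subset $S$ of unrooted vertices spans at least $|S|b/a$ edges, counting edges to roots. Hence for a typical choice of images of the roots, the number of extensions is about $d^{b}/n^{a}\cdot n^{a}\gg 1$ when compared with the number $n^{|R|}$ of root images. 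More precisely, we have
\[
\sum_{\text{root maps}}\#\text{extensions}\gtrsim n^{|R|}\,(d^b/n^{b-a}),
\]
which averages to a large number per root tuple once $C$ is large. Pigeonholing then gives a root tuple with at least $p$ distinct extensions, and their union is a member of $\cT^p$. The degree-regular reduction together with balancedness handles the degenerate overlaps; this is Lemma~1.2 in \cite{BukhConlon}.

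\textbf{Lower bound.} I would follow Bukh's random algebraic method. Take $q$ a prime power and $n\approx q^b$. Place the vertex set in $\mathbb{F}_q^{b}$, and join $x,y$ whenever $f_i(x,y)=0$ for $i=1,\dots,a$, where the $f_i$ are independent random polynomials of bounded degree. The expected number of edges is then $n^2q^{-a}=n^{2-a/b}$. For a fixed root tuple, the set of extensions to copies of $T_{a,b}$ is a variety defined by $ab$-ish random equations in $a\cdot b$ coordinates. Because the tree is balanced, this variety has dimension $0$ for all but few root tuples. By the Lang--Weil dichotomy, the number of points is then either bounded by some constant $C_0$ or at least of order $q$.

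\textbf{Main obstacle.} The hardest step is showing that the event ``at least order $q$ points'' is rare. Bukh--Conlon do this by estimating a high moment of the count of extensions, which needs precise control of how many copies of the tree the random polynomials yield. The tree structure is exactly what allows an inductive counting of independent equations along the distinguished path. Given this rarity, we delete one vertex from each bad root tuple, and that removes only a negligible number of edges. Taking $p>C_0$ leaves a $\cT^p$-free graph with $\Omega(n^{2-a/b})$ edges.

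Finally, the existence of a balanced rooted tree with exactly $a$ unrooted vertices spanning a path and $b\ge a-1$ edges is Lemma~1.3 of \cite{BukhConlon}. I would cite it as stated, giving Theorem~\ref{thm:BC-lower}.
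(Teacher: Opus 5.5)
Your proposal takes the same route as the paper, which gives no argument of its own and simply cites Bukh--Conlon's upper bound for $\cT^p$, their random algebraic lower bound for balanced rooted trees, and the balancedness of $T_{a,b}$. As a citation this is fine, but your sketch misplaces balancedness. It plays no role in the upper bound: greedy embedding in a subgraph of large minimum degree already yields injective extensions, and overlaps between the $p$ copies are allowed in $\cT^p$. It enters the lower bound through the inequality $a\,e(K)\ge b\,u(K)$ for every union $K$ of extensions sharing the roots, where $u(K)$ is the number of unrooted vertices of $K$. This inequality is what makes the $s$-th moment of the number of extensions $O_s(1)$, and the path is used only to check that $T_{a,b}$ is balanced.

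Your count also needs $n^{1-a/b}$ to be large, i.e.\ $b\ge a$. For $b=a-1\ge2$ the statement as written is in fact false, since a perfect matching is $\cT^p(P_a,\emptyset)$-free; this is harmless because the paper only uses $b>a$.
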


\begin{prop}\label{prop:BC}
For every rational $\beta\in(1,2)$, there is a finite family $\cB_\beta=\{B_1,\ldots,B_s\}$
such that
\begin{enumerate}[(i)]
\item $\ex(n,\cB_\beta)=\Theta(n^\beta)$;
\item every $B_i$ is connected;
\item every $B_i$ contains $P_4$ as a subgraph.
\end{enumerate}
\end{prop}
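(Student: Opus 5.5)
We derive Proposition~\ref{prop:BC} from Theorem~\ref{thm:BC-lower}, keeping only the members of the power family that are forced to contain the needed structure.

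\textbf{Choosing the parameters.} Write $\beta=2-a/b$ with positive integers $a<b$. Then $\beta\in(1,2)$ means exactly $0<a/b<1$. We may scale $(a,b)$ to $(ka,kb)$ for a large integer $k$. This keeps $b\ge a-1$ and also lets us assume $a\ge 3$. Theorem~\ref{thm:BC-lower} then gives $p$ with $\ex\bigl(n,\cT^p(T_{a,b},R_{a,b})\bigr)=\Theta(n^\beta)$. The family $\cT^p(T_{a,b},R_{a,b})$ is finite, since each member has at most $|R_{a,b}|+pa$ vertices.

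\textbf{Connectivity.} Every member $B$ of the family is a union of copies of the tree $T_{a,b}$ that all agree on $R_{a,b}$. If $R_{a,b}\neq\emptyset$, each copy is connected and contains the common root set, so any two copies share a vertex and $B$ is connected. If $R_{a,b}=\emptyset$, a tree with $a$ vertices has $b=a-1$ edges, so $\beta=2-a/(a-1)<1$. This is excluded, so the roots are nonempty.

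\textbf{Containing $P_4$.} Every member contains a copy of $T_{a,b}$. Its unrooted vertices contain a distinguished path on $a\ge 4$ vertices, which we ensure by taking $k$ large. Hence every member contains $P_4$.

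\textbf{Conclusion.} Set $\cB_\beta:=\cT^p(T_{a,b},R_{a,b})$. This family is finite, its members are connected and contain $P_4$, and $\ex(n,\cB_\beta)=\Theta(n^\beta)$ by Theorem~\ref{thm:BC-lower}.

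\textbf{Main obstacle.} The delicate point is the scaling step. Theorem~\ref{thm:BC-lower} is stated for all $a,b$ with $b\ge a-1$, and the rooted tree $(T_{a,b},R_{a,b})$ exists for any such pair, so replacing $(a,b)$ by $(ka,kb)$ is legitimate. I would double-check the Bukh--Conlon requirement that the unrooted vertices contain a path on $a$ vertices, and that their construction indeed yields the exponent $2-a/b$ for these balanced trees.
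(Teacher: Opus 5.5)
Your proposal is correct and follows the paper's proof essentially verbatim. You scale $2-\beta=a/b$ so that $a\ge4$, take $\cB_\beta=\cT^p(T_{a,b},R_{a,b})$ from Theorem~\ref{thm:BC-lower}, deduce connectivity from the nonempty common root set, and obtain $P_4$ from the distinguished path on the $a\ge4$ unrooted vertices. The only differences are cosmetic: you show $R_{a,b}\neq\emptyset$ by contradiction, whereas the paper directly counts $|R_{a,b}|=b-a+1\ge2$. Also, at the scaling step you should require $a\ge4$ rather than $a\ge3$, since $a\ge4$ is what the $P_4$ step needs.
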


\begin{proof}
Write $2-\beta = a_0/b_0$ with positive integers $a_0 < b_0$. Choose $t$ so that $a := t a_0 \ge 4$, and set $b := t b_0$. Let $p$ be an integer provided by Theorem~\ref{thm:BC-lower}, and define
\[
\mathcal{B}_\beta := \mathcal{T}^p(T_{a,b}, R_{a,b}).
\]
Then, by Theorem~\ref{thm:BC-lower},
\[
\ex(n, \mathcal{B}_\beta) = \Theta(n^{2 - a/b}) = \Theta(n^\beta).
\]
Note that each $T_{a,b}$ has $b+1$ vertices in total, of which $a$ are unrooted; hence its root set has size $(b+1)-a = b-a+1 \ge 2$. Moreover, each member $B_i \in \mathcal{B}_\beta$ is obtained as the union of connected copies of $T_{a,b}$ that share this nonempty root set, and is therefore connected. Since the $a$ unrooted vertices of $T_{a,b}$ contain a path on $a \ge 4$ vertices, every constituent copy of $T_{a,b}$ contains a $P_4$, and consequently so does every  $B_i\in \mathcal{B}_\beta$.
\end{proof}

\section{Localizing an edge exponent into triangles}\label{sec:localization}
In this section, we convert an edge exponent into a triangle exponent and arrange that
every triangle in the lower-bound construction contains one distinguished universal vertex.

For a graph $B$,  let $\widehat{B}:=K_1\vee B$  be its \emph{suspension}. Let $\cZ$ be the family of all isomorphism types of graphs that can be written as the union of the edge sets of three triangles $Q_0,Q_1,Q_2$ satisfying
\begin{equation*}
V(Q_0)\cap V(Q_2)=\emptyset,\qquad
V(Q_0)\cap V(Q_1)\ne\emptyset,\qquad
V(Q_1)\cap V(Q_2)\ne\emptyset.
\end{equation*}
The family $\cZ$ is finite, since such a union has at most seven vertices. The following is the main result in this section. 

\begin{theorem}\label{thm:triangle-localization}
Let $1<\beta<2$, and let $\cB=\{B_1,\ldots,B_s\}$ be a finite family of connected graphs such that every $B_i$ contains $P_4$ and
\[
\ex(n,\cB)=\Theta(n^\beta).
\]
Define
\begin{equation*}
\cA:=\cZ\cup\{\widehat{B}_1,\ldots,\widehat{B}_s\}.
\end{equation*}
Then
\[
\ex(n,K_3,\cA)=\Theta(n^\beta).
\]
Moreover, for every sufficiently large $n$, there is an $n$-vertex $\cA$-free graph $X_n$ with a vertex $x_n$ such that
\begin{equation*}
d_{X_n}(x_n)=n-1,\qquad N(K_3,X_n)=\Omega(n^\beta),
\end{equation*}
and every triangle of $X_n$ contains $x_n$. 
\end{theorem}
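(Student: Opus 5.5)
For the lower bound we build $X_n$ as a suspension of a bipartite $\cB$-free graph. For the upper bound we use the links of the vertices together with the structure forced by $\cZ$-freeness. Throughout, fix a constant $C$ with $\ex(m,\cB)\le Cm^\beta$ for all $m\ge1$; this is possible since $\ex(m,\cB)=\Theta(m^\beta)$ and $\ex(m,\cB)\le\binom m2$.

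\emph{Lower bound.} Let $G$ be a $\cB$-free graph on $n-1$ vertices with $\Omega(n^\beta)$ edges. Take a bipartite subgraph $G'\subseteq G$ with $e(G')\ge e(G)/2$; it is still $\cB$-free, and it is triangle-free. Put $X_n:=K_1\vee G'$ with apex $x_n$, so $d_{X_n}(x_n)=n-1$.
\begin{itemize}
\item Since $G'$ is triangle-free, every triangle of $X_n$ consists of $x_n$ and an edge of $G'$. Hence every triangle contains $x_n$, and $N(K_3,X_n)=e(G')=\Omega(n^\beta)$.
\item $X_n$ is $\cZ$-free. In any member of $\cZ$, the triangles $Q_0$ and $Q_2$ are vertex-disjoint, but in $X_n$ both would have to contain $x_n$.
\item $X_n$ has no copy of $\widehat B_i$. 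Suppose the apex of such a copy maps to $x_n$; then $B_i\subseteq G'$, which is impossible. Suppose instead it maps to some $v\neq x_n$. Then $B_i$ embeds in $X_n[N(v)]$, and $N(v)=\{x_n\}\cup N_{G'}(v)$. Because $G'$ is triangle-free, $N_{G'}(v)$ is independent, so $X_n[N(v)]$ is a star. A star contains no $P_4$, whereas $B_i$ does.
\end{itemize}
This gives $\ex(n,K_3,\cA)=\Omega(n^\beta)$ together with the stated properties of $X_n$.

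\emph{Upper bound.} Let $G$ be an $n$-vertex $\cA$-free graph.

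First, the link of each vertex is $\cB$-free. For any vertex $u$ and any set $S\ni u$, the graph $G[N(u)\cap S]$ is $\cB$-free, since a copy of $B_i$ there together with $u$ gives $\widehat B_i$. So the number of triangles through $u$ inside $S$ is at most $C|S|^\beta$.

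Next, group the triangles into components. Call two triangles adjacent if they share a vertex, and let the components be the connected classes of this relation. Triangles in different components are vertex-disjoint, so the vertex sets $V_1,\dots,V_k$ of the components are pairwise disjoint.

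Within one component, any two triangles intersect. To see this, suppose $Q_0,Q'$ are disjoint triangles in the same component, and take a chain of pairwise-adjacent triangles from $Q_0$ to $Q'$. Let $T_j$ be the first triangle on the chain disjoint from $Q_0$. Then $T_{j-1}$ meets both $Q_0$ and $T_j$, so $Q_0\cup T_{j-1}\cup T_j$ is a member of $\cZ$, a contradiction.

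Now bound each component. Fix a triangle $T_0$ in component $j$. Every triangle of the component meets $T_0$ and lies inside $V_j$, so the component has at most $\sum_{u\in T_0}C|V_j|^\beta=3C|V_j|^\beta$ triangles. Summing and using superadditivity of $x\mapsto x^\beta$ for $\beta\ge1$,
\[
N(K_3,G)\le 3C\sum_j|V_j|^\beta\le 3C\Big(\sum_j|V_j|\Big)^\beta\le 3Cn^\beta .
\]

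The main obstacle is the upper bound. The naive estimate $\sum_v\ex(d(v),\cB)$ can exceed $n^\beta$ by a lot, so the key step is the $\cZ$-argument: it shows each triangle component is intersecting and so is covered by the links of just three vertices.
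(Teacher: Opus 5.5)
Your proposal is correct and follows essentially the same route as the paper. The lower bound is the same suspension of a max-cut bipartite $\cB$-free graph, with the same star argument ruling out $\widehat B_i$. The upper bound is the same argument: $\cZ$-freeness makes each triangle-intersection component intersecting, which you justify with an explicit chain argument where the paper simply notes that the intersection graph $\Gamma(G)$ has no induced $P_3$. Each component is then covered by the three links of one fixed triangle, and the bounds are summed using superadditivity of $x\mapsto x^\beta$.
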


\begin{proof}
We first prove the upper bound. Let $G$ be an $n$-vertex $\mathcal{A}$-free graph, and let $\Gamma(G)$ denote its triangle-intersection graph: the vertices of $\Gamma(G)$ are the triangles of $G$, with two vertices adjacent if the corresponding triangles share a vertex in $G$. Then bounding the number of triangles in $G$ is equivalent to bounding $|V(\Gamma(G))|$.

Since $G$ is $\cZ$-free, for any 3-vertex path $Q_0Q_1Q_2$ in $\Gamma(G)$, we have $V(Q_0)\cap V(Q_2)\neq \emptyset$. This implies that every component of $\Gamma(G)$ is a clique. Let $\Gamma_1,\ldots,\Gamma_m$ be the components of $\Gamma(G)$, and put
\[
W_j:=\bigcup_{Q\in V(\Gamma_j)}V(Q).
\]
Then  $W_1,\ldots,W_m$ are pairwise disjoint. Consequently,
\begin{equation}\label{eq:Wsum}
\sum_{j=1}^m |W_j|\le n.
\end{equation}

Fix $j\in [m]$ and a triangle $abc\in V(\Gamma_j)$. Since $\Gamma_j$ is a clique, every triangle represented in $\Gamma_j$ meets $\{a,b,c\}$. Observe that the number of triangles containing a fixed vertex $v$ in a graph equals the number of edges in the induced subgraph of the neighborhood of $v$. Thus, 
\begin{equation}\label{eq:Gamma-bound}
|V(\Gamma_j)|\le e\bigl(G[N_G(a)\cap W_j]\bigr)+e\bigl(G[N_G(b)\cap W_j]\bigr)+e\bigl(G[N_G(c)\cap W_j]\bigr).
\end{equation}
For every $v\in V(G)$, the graph $G[N_G(v)]$ is $\cB$-free. By the assumption of the theorem,  there is a constant $C$ such that
\begin{equation}\label{eq:neighborhood-bound}
e(G[Y])\le C|Y|^\beta
\end{equation}
for every $Y\subseteq N_G(v)$. Applying \eqref{eq:neighborhood-bound} to \eqref{eq:Gamma-bound} gives
\[
|V(\Gamma_j)|\le3C|W_j|^\beta.
\]
This together with \eqref{eq:Wsum} and $\beta>1$ yields that 
\[
N(K_3,G)=\sum_{j=1}^m |V(\Gamma_j)|
\le3C\sum_{j=1}^m |W_j|^\beta
\le3C\left(\sum_{j=1}^m |W_j|\right)^\beta
\le3Cn^\beta.
\]
Therefore $\ex(n,K_3,\cA)=O(n^\beta)$.

For the lower bound, let $J_0$ be an $(n-1)$-vertex $\cB$-free graph with
\[
e(J_0)=\Omega(n^\beta).
\]
Take a maximum cut of $J_0$ and delete all edges internal to its two parts. The resulting spanning bipartite graph $J$ is still $\cB$-free,  and satisfies
\[
e(J)\ge\frac12e(J_0)=\Omega(n^\beta).
\]
Add a new vertex $x$ adjacent to every vertex of $J$, and set
\[
X_n:=K_1\vee J.
\]
Then $d_{X_n}(x)=n-1$. Since $J$ is bipartite, it has no triangle. Therefore every triangle of $X_n$ is uniquely of the form $xuv$ with $uv\in E(J)$, and conversely every edge $uv\in E(J)$ yields precisely one such triangle. Thus
\[
N(K_3,X_n)=e(J)=\Omega(n^\beta).
\]

It remains to show that $X_n$ is $\cA$-free. Since every two triangles of $X_n$ meet at $x$,  $X_n$ contains no member of $\cZ$.  Suppose that $X_n$ contains a copy of $\widehat{B}_i$, and let $z$ denote its apex. If $z$ is mapped to $x$, then the remaining vertices form a copy of $B_i$ in $J$, a contradiction. If $z$ is mapped to some $y\in V(J)$, then the image of $B_i$ lies in $X_n[N_{X_n}(y)]$. The set $N_J(y)$ is independent, and $x$ is adjacent to every vertex of $N_J(y)$, so $X_n[N_{X_n}(y)]$ is a star centered at $x$. A star has no $P_4$ as a subgraph, whereas $P_4\subseteq B_i$. This is again a contradiction.
\end{proof}

We conclude this section with the observation that every member of $\mathcal{A}$ is connected, and every edge of any graph in $\mathcal{A}$ lies in a triangle.

\begin{figure}[t]
\centering
\begin{tikzpicture}[scale=0.88, every node/.style={font=\small}]
  \node[draw, ellipse, minimum width=2.25cm, minimum height=1.35cm] (CH) at (0,0) {$C_H\cong K_q$};
  \fill (1.18,0) circle (1.6pt) node[below=2pt] {$c_H$};
  \coordinate (r) at (2.55,0);
  \coordinate (u) at (3.55,0.72);
  \coordinate (v) at (3.55,-0.72);
  \draw (1.18,0)--(r);
  \draw (r)--(u)--(v)--cycle;
  \fill (r) circle (1.6pt) node[below=2pt] {$r$};
  \fill (u) circle (1.6pt) node[above=2pt] {$u$};
  \fill (v) circle (1.6pt) node[below=2pt] {$v$};
  \node at (1.7,-1.25) {(a) the counting graph $H$};

  \node[draw, ellipse, minimum width=2.25cm, minimum height=1.35cm] (C) at (6.0,0) {$C\cong K_q$};
  \fill (7.20,0) circle (1.6pt) node[below=2pt] {$c$};
  \node[draw, ellipse, minimum width=3.00cm, minimum height=1.45cm, align=center] (X) at (10.80,0) {$X_{n-q}$\\[-1pt]{\scriptsize all triangles contain $x$}};
  \draw (7.20,0)--(9.20,0);
  \fill (9.20,0) circle (1.6pt) node[below=2pt] {$x$};
  \node at (8.55,-1.25) {(b) the lower-bound host $G_n$};
\end{tikzpicture}
\caption{The bridge-anchor construction. The unique cross-edge is a bridge and lies in no triangle.}\label{fig:bridge-anchor}
\end{figure}

\section{Compressing the forbidden family to one graph}\label{sec:compression}
In this section, we prove the following central reduction theorem.  Its lower-bound construction explains why Theorem~\ref{thm:triangle-localization} was arranged so that every edge of every obstruction lies in a triangle.

\begin{theorem}\label{thm:compression}
Let $\cA=\{A_1,\ldots,A_s\}$ be a finite family of graphs, and let $\beta\in (1,2)$. Assume that
\begin{enumerate}[(i)]
\item every $A_i$ is connected and contains an edge;
\item every edge of every $A_i$ lies in a triangle;
\item $\ex(n,K_3,\cA)=\Theta(n^\beta)$;
\item for every sufficiently large $n$, there is an $n$-vertex $\cA$-free graph $X_n$ with a vertex $x_n$ such that every triangle of $X_n$ contains $x_n$ and
\[
N(K_3,X_n)=\Omega(n^\beta),\qquad d_{X_n}(x_n)=\Omega(n).
\]
\end{enumerate}
Then there exist a connected graph $H$, a single graph $F$, and a distinguished vertex $r\in V(H)$ such that
\[
\ex(n,H,F)=\Theta(n^\beta)\qquad\text{and}\qquad \diam(H)\le3.
\]
Moreover, for every sufficiently large $n$, there is an $n$-vertex $F$-free graph $G_n$ with a vertex $y_n$ satisfying $d_{G_n}(y_n)=\Omega(n)$ and such that $\Omega(n^\beta)$ embeddings of $H$ into $G_n$ send $r$ to $y_n$.
\end{theorem}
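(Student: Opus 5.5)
The plan follows the bridge-anchor picture of Figure~\ref{fig:bridge-anchor}. Fix an integer $q$ larger than $\max_i v(A_i)+3$. Let $H$ be a copy $C_H\cong K_q$ with a distinguished vertex $c_H$, a bridge $c_Hr$, and a triangle $ruv$. Then $H$ is connected, and $\diam(H)\le 3$ because every vertex is within distance $1$ of $c_H$ or of $r$.

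For $F$, take a clique $C_F\cong K_q$ with a vertex $c_F$. For each $i$, attach a vertex-disjoint copy of $A_i$ by a single edge from $c_F$ to some vertex of that copy. The intended mechanism is the following. Every copy of $K_q$ must use $q$ vertices, so in any embedding of $F$ the copies of $A_i$ are vertex-disjoint from the image of $C_F$. Moreover, by (ii) the edges of the $A_i$ lie in triangles, while the attaching edges lie in no triangle of $F$.

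For the lower bound, I would take $G_n$ to be $K_q$ (with vertex $c$) together with $X_{n-q}$ from (iv), joined by the single edge $cx$, and set $y_n:=x=x_{n-q}$. First I check that $X_{n-q}$ has no $K_4$: a $K_4$ contains a triangle avoiding any prescribed vertex, contradicting the fact that every triangle contains $x$. Hence every $K_q$ in $G_n$ (with $q\ge4$) equals $C$. In an embedding of $F$, the image of $C_F$ is therefore $C$. The copies of $A_i$ then avoid $C$ entirely and, being connected, must lie in $X_{n-q}$. This contradicts that $X_{n-q}$ is $\cA$-free (already for one index $i$). So $G_n$ is $F$-free. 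Embeddings of $H$ sending $C_H\to C$, $c_H\mapsto c$, $r\mapsto x$ and $ruv$ to a triangle through $x$ number at least $(q-1)!\cdot N(K_3,X_{n-q})=\Omega(n^\beta)$. Also $d_{G_n}(x)=\Omega(n)$. This gives both the lower bound and the "moreover" part.

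For the upper bound, let $G$ be $n$-vertex and $F$-free. I would write $N(H,G)$ as a sum over triples $(T,r,K)$: a triangle $T$, a vertex $r\in T$, and a $q$-clique $K$ with a vertex $c$ adjacent to $r$. Call a $q$-clique $K$ \emph{rich at $c$} if $G-K$ contains, for every $i$, a copy of $A_i$ with a vertex adjacent to $c$, the $s$ copies pairwise disjoint. $F$-freeness says no clique is rich at any of its vertices. I would then split the triangles into two classes.

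First class: triangles lying in a subgraph where some $A_i$ is excluded. Here (iii) together with the localization argument of Theorem~\ref{thm:triangle-localization} (neighbourhoods of vertices, disjoint clique components) should bound the triangle count by $O(n^\beta)$. The multiplicity coming from choosing $(c,K)$ must then be shown to be $O(1)$ per triangle.

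Second class: the remaining triangles. For these, the abundance of copies of every $A_i$ near $r$ should force, by a greedy argument, any $q$-clique at a neighbour $c$ of $r$ to avoid a bounded set of vertices and hence be rich. This contradicts $F$-freeness, so such $(T,r,K)$ contribute nothing.

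The main obstacle is precisely this upper bound, and in particular controlling the number of $q$-cliques attached to a given $r$. An $F$-free graph can contain a huge number of $K_q$'s, for instance bounded-size complete graphs, so the bound cannot come from counting cliques alone. I would first try to choose $q$ so large relative to $v(A_i)$ that any vertex $c$ lying in two $q$-cliques which are not almost equal already yields, inside the union of those cliques, a rich configuration. The extra cliques make the disjoint copies of each $A_i$ available, since every $A_i$ embeds in a large clique. If that works, each $c$ lies in $O(1)$ $q$-cliques up to bounded overlap. Then $N(H,G)\le O(1)\sum_r t(r)\,|\{c\sim r: c\in\text{some }K_q\}|$, where $t(r)$ is the number of triangles through $r$. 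The remaining task is to show that the neighbours $c$ of $r$ carrying cliques are boundedly many whenever $t(r)$ is large, again via the forbidden attachment structure. If this fails, the fallback is to modify $F$ by attaching the $A_i$'s through longer paths, which makes richness easier to certify.
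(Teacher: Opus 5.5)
There is a genuine gap, and it cannot be filled with your choice of $F$: the upper bound $\ex(n,H,F)=O(n^\beta)$ is false for it. Your lower-bound host and the diameter bound are fine.

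Your $F$ requires copies of \emph{all} of $A_1,\ldots,A_s$ at once, each attached to $c_F$ and disjoint from the image of $C_F$. So an $F$-free host only has to miss \emph{one} $A_i$ next to an anchored clique. Hypothesis (iii), however, controls triangles only in hosts that miss \emph{every} $A_i$.

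Concretely, take the family $\cA=\cZ\cup\{\widehat{B}_1,\ldots,\widehat{B}_s\}$ of Theorem~\ref{thm:triangle-localization}, which satisfies (i)--(iv). Let $G$ consist of a clique $C\cong K_q$ with a vertex $c$, a disjoint set $Y$ with $G[Y]\cong K_1\vee K_{m,m}$, and all edges from $c$ to $Y$.
\begin{itemize}
\item Vertices of $C\setminus\{c\}$ have no neighbours outside $C$, and $G[\{c\}\cup Y]\cong K_2\vee K_{m,m}$ has clique number $4<q$. So $C$ is the only $q$-clique, and any copy of $F$ must place every $A_i$ inside $G[Y]$.
\item All triangles of $G[Y]$ pass through its apex. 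Hence $G[Y]$ contains no member of $\cZ$, since each member has two vertex-disjoint triangles. So $G$ is $F$-free.
\item Each of the $m^2$ triangles of $G[Y]$, together with $C$ and the edges from $c$, yields copies of $H$. Thus $N(H,G)=\Omega(n^2)$, which exceeds $n^\beta$ because $\beta<2$.
\end{itemize}
In your outline, the false step is the claim that triangles in a region where some single $A_i$ is excluded number $O(n^\beta)$ by (iii). Attaching the $A_i$ through longer paths does not help, since the same host still defeats it.

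The missing idea is to make the anchor clique act as a reservoir, which turns \emph{all members of $\cA$} into \emph{any member of $\cA$}. The paper takes $F=B_1\sqcup\cdots\sqcup B_M$, a disjoint union listing each $A_i$ at least once with $M\ge2$, and sets $q=v(F)-1$. Then $F$ has only $q+1$ vertices and contains no $K_q$.
\begin{itemize}
\item Two distinct $q$-cliques always host $F$: place $B_1$ in one clique, overlapping the other as little as possible, and put the remaining components in the other. So an $F$-free graph has at most one $q$-clique $C$.
\item If $G-C$ contained even a single $A_i$, the other components, with at most $q$ vertices in total, would fit into $C$. So $G-C$ is $\cA$-free.
\end{itemize}
This gives $\emb(H,G)\le q!\,\emb(K_3,G-C)=O(n^\beta)$ directly. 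It also removes the difficulty you flag about $F$-free graphs containing many $q$-cliques, which is a genuine difficulty for your $F$.

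With the paper's $F$, $F$-freeness of the bridge-anchor host is checked via (ii) rather than via uniqueness of $K_q$. No component can cross the bridge $cx$, because that edge lies in no triangle. No component fits in the $\cA$-free graph $X$. And the components cannot all fit in $C$, since $v(F)=q+1$.
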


\begin{proof}[Proof of Theorem \ref{thm:compression}]
Choose a list $B_1,\ldots,B_M$ of members of $\cA$ in which every $A_i$ occurs at least once and $M\ge2$; repetitions are allowed. Put
\begin{equation*}
L:=\sum_{j=1}^M v(B_j),\qquad q:=L-1,
\end{equation*}
and define the single forbidden graph
\begin{equation*}
F:=B_1\sqcup\cdots\sqcup B_M.
\end{equation*}

Define $H$ as follows. Take a clique $C_H\cong K_q$ and a vertex-disjoint triangle on vertices $r,u,v$. Fix $c_H\in C_H$ and add the single edge $c_Hr$. Thus $c_Hr$ is a bridge. The graph $H$ is connected and has diameter at most $3$. The two graphs used in the lower-bound construction are summarized in Figure~\ref{fig:bridge-anchor}.

\begin{claim}\label{clm:unique-clique}
Every $F$-free graph contains at most one $q$-clique.
\end{claim}
\begin{proof}
Suppose that an $F$-free graph $G$ contains two distinct $q$-cliques $C$ and $D$. Let
\[
\delta:=|C\setminus D|=|D\setminus C|\ge1,\qquad b_1:=v(B_1).
\]
Since $M\ge2$ and every $B_j$ contains an edge, we have $b_1\le L-2<q$. Choose $S\subseteq C$ with $|S|=b_1$ so that $|S\cap D|$ is as small as possible. Since $C\setminus D$ has $\delta$ vertices,
\[
|S\cap D|=\max\{0,b_1-\delta\}.
\]
It follows that
\[
|D\setminus S|=q-\max\{0,b_1-\delta\}
\ge L-b_1=\sum_{j=2}^M v(B_j).
\]
Indeed, if $b_1\le\delta$, then $|D\setminus S|=q=L-1\ge L-b_1$; if $b_1>\delta$, then
\[
|D\setminus S|=L-1-b_1+\delta\ge L-b_1.
\]
Thus, we can find a copy of  $B_1$ in $C[S]$,  vertex-disjoint  copies of $B_2,\ldots,B_M$ in $D\setminus S$. The resulting component-images are mutually vertex-disjoint and form a copy of $F$, a contradiction.
\end{proof}

\begin{claim}\label{clm:remove-clique}
If $C$ is a $q$-clique in an $F$-free graph $G$, then $G-C$ is $\cA$-free.
\end{claim}
\begin{proof}
Suppose that $G-C$ contains a copy of some $A_i$. Choose $j$ with $B_j=A_i$ and use this copy for the $j$th component of $F$. The remaining components require $L-v(A_i)\le L-1=q$ vertices in total, which can be found in $C$, a contradiction.
\end{proof}

We now prove the upper bound. For  sufficiently large $n$, let $G$ be an $n$-vertex $F$-free graph. If $G$ contains no $q$-clique, then $G$ is $H$-free and we are done. Otherwise, by Claim~\ref{clm:unique-clique}, $G$ has a unique $q$-clique $C$. In every embedding of $H$ into $G$, the $q$ vertices of the clique $C_H$ have a $q$-clique as their image, and hence map bijectively onto $C$. The triangle $ruv$, being vertex-disjoint from $C_H$, maps into $G-C$. Hence
\begin{equation}\label{eq:emb-H-G-upperbound}
\emb(H,G)\le q!\,\emb(K_3,G-C).
\end{equation}
By Claim~\ref{clm:remove-clique}, the graph $G-C$ is $\cA$-free. Assumption (iii) gives $N(K_3,G-C)=O(n^\beta)$, and therefore $\emb(K_3,G-C)=6N(K_3,G-C)=O(n^\beta)$. This together with \eqref{eq:emb-copies} and \eqref{eq:emb-H-G-upperbound} gives 
\[
N(H,G)=O(n^\beta).
\]
Thus $\ex(n,H,F)=O(n^\beta)$.

For the lower bound, take the special $\cA$-free graph
\[
X:=X_{n-q}
\]
from assumption (iv), with distinguished vertex $x$. Since $q$ is fixed,
\[
N(K_3,X)=\Omega(n^\beta),\qquad d_X(x)=\Omega(n).
\]
Take a vertex-disjoint clique $C\cong K_q$, choose $c\in V(C)$, and add exactly one edge between $C$ and $X$, namely $cx$. Denote the resulting $n$-vertex graph by $G_n$, and put $y_n:=x$. Note that $cx$ lies in no triangle of $G_n$. 

We claim that $G_n$ is $F$-free. Suppose otherwise, and consider a connected component $B_j$ of a copy of $F$. If its image met both $C$ and $X$, connectivity supplies a path in $B_j$ whose endpoints map to different sides. The first edge of this path whose image changes sides must map to the unique cross-edge $cx$. By assumption (ii), this edge of $B_j$ lies in a triangle; injectivity sends that triangle to a triangle of $G_n$ containing $cx$, a contradiction. Thus the image of each $B_j$ lies entirely in $C$ or entirely in $X$. Furthermore, we know that no $B_j$ can lie in $X$, as $X$ is $\cA$-free and $B_j\in\cA$. Hence every component $B_1,\ldots,B_M$ would have to lie in $C$, which is impossible as $v(F)=L=q+1$ but $v(C)=q$.

It remains to count rooted embeddings. Fix a bijection from $C_H$ to $C$ sending $c_H$ to $c$. Every triangle $\{x,a,b\}$ of $X$ gives two embeddings of $H$ into $G_n$ that send
\[
r\mapsto x,\qquad \{u,v\}\mapsto\{a,b\},
\]
according to the two orders of $a,b$. Thus the number of embeddings sending $r$ to $x$ is at least
\[
2N(K_3,X)=\Omega(n^\beta).
\]
Moreover,
\[
d_{G_n}(x)=d_X(x)+1=\Omega(n).
\]
Since $H$ is fixed, \eqref{eq:emb-copies} converts this rooted embedding estimate into $N(H,G_n)=\Omega(n^\beta)$, completing the proof.
\end{proof}

\section{Shifting the exponent at a rooted vertex}\label{sec:shift}
The following reduction is elementary, but its rooted formulation makes the proof of Theorem~\ref{thm:main} modular.

\begin{lemma}\label{lem:rooted-shift}
Let $H$ and $F$ be fixed graphs, let $r\in V(H)$, and let $\beta\in (1,2)$. Assume that
\[
\ex(n,H,F)=O(n^\beta).
\]
Suppose that, for every sufficiently large $n$, there is an $n$-vertex $F$-free graph $G_n$ with a vertex $x_n$ such that
\begin{enumerate}[(i)]
\item $d_{G_n}(x_n)=\Omega(n)$;
\item $\Omega(n^\beta)$ embeddings of $H$ into $G_n$ send $r$ to $x_n$.
\end{enumerate}
For an integer $k\ge0$, let $H^{(k)}$ be obtained from $H$ by adjoining $k$ new leaves, each adjacent only to $r$. Then
\[
\ex(n,H^{(k)},F)=\Theta(n^{\beta+k}).
\]
\end{lemma}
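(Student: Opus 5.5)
Both bounds come from one observation: an embedding of $H^{(k)}$ is an embedding $\varphi$ of $H$ together with an injective choice of images for the $k$ leaves among the neighbours of $\varphi(r)$ outside $\varphi(V(H))$. We work with $\emb$ throughout and convert to copies via \eqref{eq:emb-copies}, since $H$ and $H^{(k)}$ are fixed.

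\textbf{Upper bound.} Let $G$ be an $n$-vertex $F$-free graph. Each embedding of $H^{(k)}$ restricts to an embedding $\varphi$ of $H$. Given $\varphi$, the $k$ leaves must go to distinct neighbours of $\varphi(r)$, so there are at most $d_G(\varphi(r))^k\le n^k$ ways to extend $\varphi$. Hence
\[
\emb(H^{(k)},G)\le n^k\,\emb(H,G)=O(n^{\beta+k}),
\]
where the last step uses the hypothesis $\ex(n,H,F)=O(n^\beta)$ together with \eqref{eq:emb-copies}. Converting back gives $N(H^{(k)},G)=O(n^{\beta+k})$.

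\textbf{Lower bound.} We use the given graph $G_n$ with vertex $x_n$. By (i) there is a constant $c>0$ with $d_{G_n}(x_n)\ge cn$. Take any embedding $\varphi$ of $H$ with $\varphi(r)=x_n$. The number of neighbours of $x_n$ outside $\varphi(V(H))$ is at least $cn-v(H)$. Choosing the $k$ leaf images injectively among these gives at least
\[
(cn-v(H)-k)^k=\Omega(n^k)
\]
extensions of $\varphi$ for large $n$. Each extension is an embedding of $H^{(k)}$, because the only edges of $H^{(k)}$ beyond those of $H$ are the leaf edges at $r$, and these map to edges at $x_n$. Distinct pairs $(\varphi,\text{leaf choice})$ give distinct maps. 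By (ii) there are $\Omega(n^\beta)$ such $\varphi$, so $\emb(H^{(k)},G_n)=\Omega(n^{\beta+k})$. Since $G_n$ is $F$-free, applying \eqref{eq:emb-copies} gives $\ex(n,H^{(k)},F)=\Omega(n^{\beta+k})$.

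\textbf{Where the care is needed.} There is no real obstacle here. The only points to watch are that the leaf images avoid $\varphi(V(H))$ (this is the subtraction of $v(H)+k$ above) and that we count embeddings rather than copies, so that different embeddings of $H$ do not give the same copy of $H^{(k)}$. Both are handled by working with $\emb$ and converting at the end. For $k=0$ the statement is the hypothesis combined with (ii).
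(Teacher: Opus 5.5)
Your proposal is correct and follows essentially the same argument as the paper. The upper bound restricts each embedding of $H^{(k)}$ to $H$ and allows at most $n^k$ leaf extensions. The lower bound extends each rooted embedding at $x_n$ injectively into $N_{G_n}(x_n)\setminus\varphi(V(H))$, and both directions count embeddings and convert to copies only at the end.
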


\begin{proof}
For the upper bound, every embedding of $H^{(k)}$ into an $F$-free graph $G$ restricts to an embedding of $H$. By the hypothesis and \eqref{eq:emb-copies}, $\emb(H,G)=O(n^\beta)$. Once this restriction is fixed, the $k$ labeled leaves have at most $n^k$ possible images. Hence
\[
\emb(H^{(k)},G)\le n^k\emb(H,G)=O(n^{\beta+k}),
\]
and \eqref{eq:emb-copies} gives
\[
\ex(n,H^{(k)},F)=O(n^{\beta+k}).
\]

For the lower bound, use $G_n$ and $x_n$. Let $R_n$ be the number of embeddings $\varphi:H\to G_n$ satisfying $\varphi(r)=x_n$. Then $R_n=\Omega(n^\beta)$. For each such $\varphi$, the new leaves may be mapped injectively to distinct vertices of
\[
N_{G_n}(x_n)\setminus\varphi(V(H)).
\]
Thus the number of extensions of $\varphi$ is at least
\[
\bigl(d_{G_n}(x_n)-v(H)\bigr)_k,
\]
where $(m)_0:=1$ and $(m)_k=m(m-1)\cdots(m-k+1)$ for $k\ge1$. Since $d_{G_n}(x_n)=\Omega(n)$ and $H,k$ are fixed, this quantity is $\Omega(n^k)$ for all sufficiently large $n$. Distinct embeddings of $H$ give distinct extensions after restriction, so
\[
\emb(H^{(k)},G_n)\ge R_n\bigl(d_{G_n}(x_n)-v(H)\bigr)_k=\Omega(n^{\beta+k}).
\]
A final application of \eqref{eq:emb-copies} gives the lower bound.
\end{proof}

\section{Proof of the main theorem}\label{sec:proof-main}
\begin{proof}[Proof of Theorem~\ref{thm:main}]
Fix a rational number $\alpha\ge1$. If $\alpha=1$, take $H_\alpha=K_2$ and $F_\alpha=P_3$, and we are done.  If $\alpha=m\ge2$ is an integer, then take $H_\alpha=K_m$ and $F_\alpha=K_{m+1}$. The trivial upper bound $N(K_m,G)\le\binom{n}{m}$ gives $O(n^m)$, while the balanced complete $m$-partite graph yields $N(K_m,G)\ge \Omega(n^m)$. Thus, 
\[
\ex(n,K_m,K_{m+1})=\Theta(n^m).
\]

It remains to consider nonintegral $\alpha>1$. Write
\[
\alpha=m+\theta,\qquad m=\lfloor\alpha\rfloor\ge1,\qquad 0<\theta<1,
\]
and set
\begin{equation}\label{eq:betak}
\beta:=1+\theta\in(1,2),\qquad k:=m-1\ge0.
\end{equation}
Then $\beta$ is rational and
\begin{equation}\label{eq:beta-k-alpha}
\beta+k=\alpha.
\end{equation}

Apply Proposition~\ref{prop:BC} to obtain a finite family $\cB_\beta$ of connected graphs, each containing $P_4$, such that
\[
\ex(n,\cB_\beta)=\Theta(n^\beta).
\]
By Theorem~\ref{thm:triangle-localization}, there is a finite family $\cA_\beta$ such that
\[
\ex(n,K_3,\cA_\beta)=\Theta(n^\beta),
\]
every member of $\cA_\beta$ is connected, and every edge of every member lies in a triangle. The same theorem supplies, for all sufficiently large $n$, an $\cA_\beta$-free graph $X_n$ with $\Omega(n^\beta)$ triangles, all through a universal vertex $x_n$. 

All hypotheses of Theorem~\ref{thm:compression} are therefore satisfied. We obtain a connected graph $H_\beta$, a single forbidden graph $F_\beta$, and a root $r\in V(H_\beta)$ such that
\[
\ex(n,H_\beta,F_\beta)=\Theta(n^\beta),\qquad \diam(H_\beta)\le3,
\]
and such that the rooted lower-bound hypothesis of Lemma~\ref{lem:rooted-shift} holds.

Apply Lemma~\ref{lem:rooted-shift} with the integer $k$ from \eqref{eq:betak}, and set
\[
H_\alpha:=H_\beta^{(k)},\qquad F_\alpha:=F_\beta.
\]
By \eqref{eq:beta-k-alpha},
\[
\ex(n,H_\alpha,F_\alpha)=\Theta(n^{\beta+k})=\Theta(n^\alpha).
\]

The graph $H_\alpha$ is connected. In the construction of Theorem~\ref{thm:compression}, the root $r$ is the triangle-side endpoint of the bridge joining the anchor clique to the triangle. Every new leaf is adjacent to $r$. A new leaf is at distance at most $3$ from every clique vertex and at distance at most $2$ from every triangle vertex; two new leaves are at distance $2$.   Hence we still have  $\diam(H_\alpha)\le3.$
This completes the proof.
\end{proof}

\section{Concluding remarks}

In this note, we prove the generalized rational exponents conjecture. 
The proof builds on the Bukh--Conlon finite family construction. 
We first localize their edge lower bound to rooted triangle counts, then compress the resulting forbidden family into a single forbidden graph. 
Finally, rooted pendant leaves are used to shift the exponent by an arbitrary nonnegative integer.

The forbidden graph produced by Theorem~\ref{thm:compression} is generally disconnected. Its components encode the members of a finite forbidden family, while the anchor clique has exactly one vertex fewer than the total number of vertices needed to host all components. The bridge lies in no triangle, so the hypothesis that every obstruction edge lies in a triangle prevents a connected component from using vertices on both sides. The argument address us to the  following stronger problem.

\begin{ques}\label{ques:connected-F}
For every rational number $\alpha \ge 1$, we ask whether there exist connected graphs $H_\alpha$ and $F_\alpha$ such that
\[
\operatorname{ex}(n, H_\alpha, F_\alpha) = \Theta(n^\alpha).
\]
\end{ques}

\section*{Acknowledgements}
Jianfeng Hou was supported by the National Key R\&D Program of China (Grant No. 2023YFA1010202) and by the Central Guidance on Local Science and Technology Development Fund of Fujian
 Province (Grant No. 2023L3003). Caihong Yang was supported by the Scientific Research
Funds at China University of Geosciences (Wuhan) (Project No.2026039).

\bibliographystyle{abbrv}
\bibliography{main}
\end{document}